\def\titlerunning#1{\gdef\titrun{#1}}
\def\author#1{\gdef\autrun{\def\and{\unskip, }#1}\gdef\@author{#1}}
\def\address#1{{\def\and{\\\hspace*{18pt}}\renewcommand{\thefootnote}{}%
		\footnote {#1}}%
	\markboth{\autrun}{\titrun}}
\def\email#1{e-mail: #1}
\def\subjclass#1{{\renewcommand{\thefootnote}{}%
		\footnote{\emph{Mathematics Subject Classification (2010):} #1}}}
\def\keywords#1{\par\medskip
	\noindent\textbf{Keywords.} #1}
\newtheorem{theorem}{Theorem}[section]
\newtheorem{corollary}[theorem]{Corollary}
\newtheorem{proposition}[theorem]{Proposition}
\theoremstyle{definition}
\newtheorem{definition}[theorem]{Definition}
\newtheorem{remark}[theorem]{Remark}
\theoremstyle{example}
\numberwithin{equation}{section}
\def \N {\mathbb{N}}
\def \C {\mathbb{C}}
\def \De {\Delta}
\def\Om{\Omega}
\def\na {\nabla}
\def\Ga{\Gamma}
\begin{document}
\baselineskip=17pt

\titlerunning{A compactness theorem for stable flat $SL(2,\mathbb{C})$ connections on $3$-folds}
\title{A compactness theorem for stable flat $SL(2,\mathbb{C})$ connections on $3$-folds}

\author{Teng Huang}

\date{}

\maketitle

\address{Teng Huang: School of Mathematical Sciences, University of Science and Technology of China; CAS Key Laboratory of Wu Wen-Tsun Mathematics,  University of Science and Technology of China, Hefei, Anhui, 230026, People’s Republic of China; \email{htmath@ustc.edu.cn;htustc@gmail.com}}

\begin{abstract}
Let $Y$ be a closed $3$-manifold such that all flat $SU(2)$-connections on $Y$ are $non$-$degenerate$. In this article, we prove a Uhlenbeck-type compactness theorem on $Y$ for stable flat $SL(2,\mathbb{C})$ connections satisfying an $L^{2}$-bound for the real curvature. Combining the compactness theorem and a previous result in \cite{Huang}, we prove that the moduli space of the stable flat $SL(2,\mathbb{C})$ connections is disconnected under certain technical assumptions.
\end{abstract}
\keywords{stable flat  $SL(2,\mathbb{C})$ connections, Vafa-Witten equations, compactness theorem}
\subjclass{58E15;81T13}
\section{Introduction}
Let $X$ be an oriented, closed, smooth $n$-dimensional manifold with smooth Riemannian metric $g$, and let $P$ be a principal $G$-bundle over $X$ with $G$ being a compact Lie group. We denote by $\mathcal{A}_{P}$ the set of all connections on $P$, and by $\Om^{k}(X,\mathfrak{g}_{P})$ the set of  $\mathfrak{g}_{P}$-valuled $k$-forms, where  $\mathfrak{g}_{P}$ is the adjoint bundle of $P$. Suppose that $A$ is a connection on $P$ and its curvature is denote by $F_{A}\in\Om^{2}(X,\mathfrak{g}_{P})$. For any connection $A$ on $P$ we have the covariant exterior derivatives $d_{A}:\Om^{k}(X, \mathfrak{g}_{P})\rightarrow\Om^{k+1}(X,\mathfrak{g}_{P})$. The curvature $$\mathcal{F}_{\mathcal{A}}=F_{A}-\frac{1}{2}[\phi\wedge\phi]+{\rm{i}}d_{A}\phi$$
of the complex connection $\mathcal{A}:=A+\rm{i}\phi$ is a $2$-form with values in $P\times_{G}(\mathfrak{g}_{P}^{\C})$. We say that $\mathcal{A}=A+\rm{i}\phi$ is a complex flat connection with the moment map condition, if the pair $(A,\phi)$ satisfies,
\begin{equation}\label{I1}
F_{A}-\phi\wedge\phi=0,\ d_{A}\phi=d_{A}^{\ast}\phi=0.
\end{equation}
The system of the pairs $(A,\phi)$ is elliptic \cite{Gagliardo/Uhlenbeck:2012} For convenience, we call the solutions of the elliptic system as stable flat connections, see \cite{Corlette}. These equations are not only invariant under the actions of real gauge group $\mathcal{G}_{P}=C^{\infty}(P\times_{G}G)$, but also invariant under the actions of complex gauge group $\mathcal{G}^{\C}_{P}:=C^{\infty}(P\times_{G}G_{\C})$. The solution of stable flat connections on compact Riemannian surface $\Sigma$ is also a solution of Hitchin's equation \cite{Hitchin}. The moduli space of the solutions of Hitchin's equations which satisfying $\int_{\Sigma}|\phi|^{2}\leq K$ is compact, see \cite[Theorem 4.1]{Gagliardo/Uhlenbeck:2012}. Following \cite[Proposition 2.2.3]{Donaldson/Kronheimer} or \cite[Proposition 1.2.6]{Kobayashi},  we know that the gauge-equivalence classes of flat connections over a connected manifold, $X$, are in one-to-one correspondence with the conjugacy classes of representations $\pi_{1}(X)\rightarrow G$. 

The Uhlenbeck compactness theorem \cite{Uhlenbeck1982,Wehrheim} on the moduli space of the connections with $L^{p}$-bounds on curvature is one of the most fundamental theorems in the analytical aspect of the gauge theory. In \cite{Taubes2013}, Taubes studied the Uhlenbeck style compactness problem for $SL(2,\C)$ connections, including solutions to the above equations, on three-, four-manifold \cite{Taubes2013,Taubes2014.07,Taubes2014}.

We denote by  $$\mathcal{M}(P,g):=\{(A,\phi)\in\mathcal{A}_{P}\times\Om^{1}(\mathfrak{g}_{P})\mid \mathcal{F}_{\mathcal{A}}=0,\ d^{\ast}_{A}\phi=0\}/\mathcal{G}_{P},$$
the moduli space of the stable flat $SL(2,\mathbb{C})$ connections. In particular, the moduli space of gauge-equivalence classes $[\Ga]$ of flat connections $\Ga$ on $P$, $$M(P,g):=\{\Ga\in\mathcal{A}_{P}: F_{\Ga}=0\}/\mathcal{G}_{P}$$
can be embedded into $\mathcal{M}(P,g)$ via the map $A\mapsto(A,0)$. The Uhlenbeck compactness theorem \cite{Uhlenbeck1982} shows that the moduli space $M(P,g)$ is compact.

One can see that the pair $(A,\phi)$ has the a priori estimate (see \cite{Gagliardo/Uhlenbeck:2012})
\begin{equation}\label{E1.2}
\int_{X}|\na_{A}\phi|^{2}+|F_{A}|^{2}\leq c_{0}\int_{X}|\phi|^{2}, 
\end{equation} 
where $c_{0}$ is a positive constant dependent on the metric $g$. Then the Uhlenbeck compactness theorem implies that the moduli space of solutions of stable flat $SL(2,\mathbb{C})$ connections satisfying (\ref{I1}) with $\int_{X}|\phi|^{2}\leq K$ is compact, for every given positive constant $K$. On the other hand, there are examples of sequences of solutions $(A_{i},\phi_{i})$ to (\ref{I1}) such that $\|\phi_{i}\|_{L^{2}(X)}$ diverges to infinity, therefore the moduli space of solutions to (\ref{I1}) is not always compact. An interesting question to ask if following:\\
let $K$ be a positive constant number, and consider the subset of $\mathcal{M}(P,g)$ consisting of $(A,\phi)$ such that $\|F_{A}\|_{L^{2}(X)}\leq K$ is this subset alawys compact?\\
In this article, we consider the case for  the stable flat $SL(2,\mathbb{C})$ connections on  a closed, smooth, oriented three-manifold $Y$. We will give a positive answer for this question if $Y,G,P$ satisfy certain conditions.

We denote by 
$$\De_{\Ga}:=d_{\Ga}^{\ast}d_{\Ga}+d_{\Ga}d_{\Ga}^{\ast}$$
the self-dual operator with respect to a flat connection $\Ga$. We recall the definition of \textit{non-degenerate} flat connections as follows, see \cite[ Definition 2.4]{Donaldson}.
\begin{definition}\label{D3.6}
	Let $G$ be a compact Lie group, $P$ be a $G$-bundle over a closed, smooth manifold $X$ of dimension $n\geq2$ and endowed with a smooth Riemannian metric $g$. The flat connection $\Ga$ called $non$-$degenerate$ if $$\ker{\De_{\Ga}}|_{\Om^{1}(X,\mathfrak{g}_{P})}=\{0\}.$$
\end{definition}
The main observation of this article can be stated as follows.
\begin{theorem}\label{T1.1}(A compactness theorem for stable flat $SL(2,\mathbb{C})$ connections with bounded real curvatures). Let $Y$ be a closed, oriented, smooth Riemannian  three-manifold, $P$ be a principal  $SU(2)$ or $SO(3)$-bundle over $Y$. Let $\{(A_{i},\phi_{i})\}_{i\in\mathbb{N}}$ be a sequence of $C^{\infty}$-solutions of Equations (\ref{I1}). Suppose that all flat connections on the principal bundle $P$ are $non$-$degenerate$. If the $L^{2}$-norms of the curvatures $\|F_{A_{i}}\|_{L^{2}(Y)}$ are bounded, then there is a subsequence of $\Xi\subset\mathbb{N}$ and a sequence of gauge transformations $\{u_{i}\}_{i\in\Xi}$ such that $\{(u^{\ast}_{i}(A_{i}),u_{i}^{\ast}(\phi_{i}))\}_{i\in\Xi}$ converges to a pair $(A_{\infty},\phi_{\infty})$ obeying Equations (\ref{I1}) on $P$ in the $C^{\infty}$-topology. In particular, the moduli space of solutions of stable flat $SL(2,\mathbb{C})$ connections which obeys $\int_{Y}|F_{A}|^{2}\leq K$ is compact, for every positive constant $K$.
\end{theorem}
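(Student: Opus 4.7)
My plan is to reduce the statement to Tanaka's compactness theorem for Vafa-Witten and then use the non-degeneracy hypothesis to rule out the only remaining failure mode. A pair $(A,\phi)$ solving (\ref{I1}) on the $3$-manifold $Y$ can be viewed as the dimensional reduction of the Vafa-Witten equations on the $4$-manifold $Y\times S^{1}$ (under $S^{1}$-invariance, self-dual $2$-forms on $Y\times S^{1}$ correspond to $1$-forms on $Y$, and the Vafa-Witten equations collapse to exactly (\ref{I1})). Under this identification the hypothesis $\|F_{A_i}\|_{L^{2}(Y)}\leq K$ supplies precisely the curvature bound required by Tanaka's theorem, and that theorem produces a subsequence, gauge transformations $g_i$, and a limit pair $(A_\infty,\phi_\infty)$ with $g_i^{\ast}(A_i,\phi_i)\to(A_\infty,\phi_\infty)$ smoothly, \emph{provided} the Higgs norm $\|\phi_i\|_{L^{2}(Y)}$ stays uniformly bounded. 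Once such an $L^{2}$-bound is in hand, Coulomb gauge fixing together with the elliptic system (\ref{I1})$+d_{A}^{\ast}\phi=0$ delivers the stated $C^{\infty}$ convergence on $Y$ by routine elliptic bootstrap.

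The whole problem is thus to prove a uniform bound on $\|\phi_i\|_{L^{2}(Y)}$. I would argue by contradiction: suppose $c_i:=\|\phi_i\|_{L^{2}(Y)}\to\infty$ and set $\hat\phi_i:=\phi_i/c_i$, so $\|\hat\phi_i\|_{L^{2}}=1$. Rescaling (\ref{I1}) gives
$$F_{A_i} = c_i^{2}\,\hat\phi_i\wedge\hat\phi_i,\qquad d_{A_i}\hat\phi_i = d_{A_i}^{\ast}\hat\phi_i = 0,$$
so $\|\hat\phi_i\wedge\hat\phi_i\|_{L^{2}} = O(c_i^{-2})\to 0$. Uhlenbeck's theorem applied to $A_i$ (uniformly bounded curvature in $L^2$) yields further gauge transformations $h_i$ and a weak $W^{1,2}$-limit $A_\infty$ on $P$, and the vanishing of $\|F_{A_i}\|_{L^{2}}=c_i^{2}\|\hat\phi_i\wedge\hat\phi_i\|_{L^{2}}/\|1\|$ in a suitable sense forces $F_{A_\infty}=0$. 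In the same gauge the sequence $\hat\phi_i$ is bounded in $L^{2}$ and satisfies the coupled harmonic equation, so Rellich compactness together with the elliptic estimate for $\Delta_{h_i^{\ast}A_i}$ produces a strong $L^{2}$-limit $\hat\phi_\infty$ with $\|\hat\phi_\infty\|_{L^{2}}=1$ obeying $d_{A_\infty}\hat\phi_\infty = d_{A_\infty}^{\ast}\hat\phi_\infty = 0$. Hence $\hat\phi_\infty$ is a nonzero element of $\ker\Delta_{A_\infty}\big|_{\Om^{1}(Y,\mathfrak{g}_P)}$, directly contradicting the non-degeneracy hypothesis on the flat connection $A_\infty$.

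The main obstacle I expect is the blow-up analysis itself, namely ensuring that the rescaled limit $\hat\phi_\infty$ is genuinely nonzero on a limit bundle isomorphic to $P$. Weak $L^{2}$-compactness alone does not prevent $\hat\phi_i$ from concentrating into a measure-zero singular set or losing mass under Uhlenbeck-type bubbling; extracting a nonzero limit therefore requires an $\varepsilon$-regularity/energy-concentration statement for the harmonic equation $d_{A}\phi=d_{A}^{\ast}\phi=0$ coupled to a connection of bounded curvature. The restriction to $G=SU(2)$ or $SO(3)$ enters here through a pointwise identity controlling $|\phi\wedge\phi|$ in terms of $|\phi|^{2}$ away from the abelian locus, and Tanaka's bubbling analysis on $Y\times S^{1}$ supplies the quantization that rules out energy loss at the scale under consideration. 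Once these two ingredients are combined, the non-degeneracy assumption yields the clean contradiction and the theorem follows.
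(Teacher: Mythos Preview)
Your overall strategy matches the paper's: lift to Vafa--Witten on $X=Y\times S^{1}$, assume $\|\phi_i\|_{L^2}\to\infty$, rescale, and contradict non-degeneracy. The paper also proves the theorem by showing that an unbounded $\|\phi_i\|_{L^2}$ is impossible, and then appeals to Mares's/Tanaka's compactness once the extra field is bounded.

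There is, however, a real gap in your blow-up step. You assert that the Uhlenbeck limit $A_\infty$ is flat, writing that ``the vanishing of $\|F_{A_i}\|_{L^{2}}=c_i^{2}\|\hat\phi_i\wedge\hat\phi_i\|_{L^{2}}\ldots$ forces $F_{A_\infty}=0$''. But $\|F_{A_i}\|_{L^2}$ is merely bounded, not tending to zero; what tends to zero is $\|\hat\phi_i\wedge\hat\phi_i\|_{L^2}$, and the product $c_i^{2}\cdot\|\hat\phi_i\wedge\hat\phi_i\|_{L^2}$ is an $\infty\cdot 0$ indeterminate. Nothing in your argument rules out $F_{A_\infty}\neq 0$, and without flatness the non-degeneracy hypothesis---which only constrains $\ker\Delta_\Gamma$ for \emph{flat} $\Gamma$---simply does not apply to $A_\infty$. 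This is precisely the point at which the paper invokes Tanaka rather than arguing directly: Tanaka's result (Theorem~\ref{TY} together with Proposition~\ref{P2.5}) asserts that when $S(\{A_i\})=\emptyset$ and $\|B_i\|_{L^2}\to\infty$, the limiting connection on $X$ is anti-self-dual and carries a covariantly constant $\sigma_0$; for an $S^{1}$-invariant connection pulled back from $Y$, anti-self-dual on $X$ is equivalent to flat on $Y$. Establishing this anti-self-duality is a substantial piece of the Taubes--Tanaka analysis, not a soft consequence of Uhlenbeck compactness plus the equation.

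Your diagnosis of the ``main obstacle'' is also somewhat misplaced. Getting a nonzero limit $\hat\phi_\infty$ is not where the difficulty lies: the Gagliardo--Uhlenbeck bound $\|\phi\|_{L^\infty}\leq C\|\phi\|_{L^2}$ (Theorem~\ref{F6}) gives $\|\hat\phi_i\|_{L^\infty}\leq C$, which already rules out concentration of $\hat\phi_i$, and the identity~(\ref{Y1}) yields $\|\nabla_{A_i}\hat\phi_i\|_{L^2}\leq C$, so Rellich produces a strong $L^2$ limit of unit norm. The genuine obstacles are (i) the flatness of the limiting connection, discussed above, and (ii) the fact that in the Taubes framework the limiting Higgs field is a priori only a $\mathbb{Z}/2$-twisted object $\nu\otimes\sigma_\Delta$ on $X\setminus Z$. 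The paper handles (ii) by using Tanaka's extension result (Proposition~\ref{P2.5}) to push $\nu$, $\sigma_0$, and the anti-self-dual connection across $Z$ as honest global objects; it then interprets $\nu\otimes\sigma_0$ as a genuine $\phi_0\in\Omega^1(Y,\mathfrak g_P)$ lying in $\ker\Delta_{A_0}$, concludes $\phi_0=0$ from non-degeneracy, and obtains the contradiction from the $C^0$ convergence $r_i^{-1}|B_i|\to|\nu|\equiv 0$ against $\|r_i^{-1}B_i\|_{L^2(X)}\asymp 1$. Your sketch defers to ``Tanaka's bubbling analysis'' only for the concentration issue, but you need it already one step earlier, to know that the connection you are feeding into the non-degeneracy hypothesis is actually flat.
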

\begin{remark}
	Taubes \cite{Taubes2013} considered a sequence of complex connections  $\mathcal{A}_{i}:=A_{i}+{\rm{i}}\phi_{i}$ such that the $L^{2}$-norms of $F_{\mathcal{A}_{i}}$ are bounded (in this article, the complex curvature is just zero). There are two possible cases: (1) if $\|\phi_{i}\|_{L^{2}(X)}$ has a bounded sequence, then Taubes proves $(u^{\ast}_{i}(A_{i}),u_{i}^{\ast}(\phi_{i}))\rightarrow (A_{\infty},\phi_{\infty})$ in the $C^{\infty}$-topology, (2) if $\|\phi_{i}\|_{L^{2}(X)}$ has no bounded sequence, then Taubes makes sense of the limit as a $\mathbb{Z}_{2}$ harmonic spinor. In particular, if the sequence $\|F_{A_{i}}\|_{L^{2}(Y)}$ is divergent to infinity, then following the inequality (\ref{E1.2}), one can see that the sequence $\|\phi_{i}\|_{L^{2}(Y)}$ is also divergent to infinity. 
\end{remark}
As a particular case of Theorem \ref{T1.1}, we have an $L^{2}$-bound on the extra fields in the fibre direction at a connection $A_{0}$. Namely, 
\begin{corollary}\label{C2}
	Let $Y$ be a closed, oriented, smooth Riemannian  three-manifold, $P$ be a principal  $SU(2)$ or $SO(3)$-bundle over $Y$. Suppose that all flat connections on the principal bundle $P$ are $non$-$degenerate$. Then for any sequence of solutions $\{(A_{0},\phi_{i})\}_{i\in\mathbb{N}}$ of Equations (\ref{I1}), there exists a subsequence $\Xi\subset\mathbb{N}$ and a positive constant $C>0$ such that $\int_{X}|\phi_{i}|^{2}\leq C$ for all $i\subset\Xi$. 
\end{corollary}
The Corollary \ref{C2} is similar to the  Vafa-Witten equations case, see\cite[Corllary 1.4]{Tanaka2017}. 

Following the notation of \cite[ Section 4.2.1]{Donaldson/Kronheimer}, we denote by $([A,\phi])$ the equivalence class of a pair $(A,\phi)$, that is a point in $\mathcal{M}(P,g)$. We denote 
$$\|(A_{1},\phi_{1})-(A_{2},\phi_{2})\|^{2}:=\|A_{1}-A_{2}\|^{2}_{L^{2}_{1}(X)}+\|\phi_{1}-\phi_{2}\|^{2}_{L^{2}_{1}(X)}.$$
We can define a distance function on $\mathcal{M}(P,g)$ as follows:
$$dist\big{(}(A_{1},\phi_{1}), (A_{2},\phi_{2})\big{)}:=\inf_{u\in\mathcal{G}_{P}}\|(A_{1},\phi_{1})-u^{\ast}(A_{2},\phi_{2})\|.$$
We can use the compactness theorem \ref{T1.1} to study the topological of the moduli space of stable flat $SL(2,\mathbb{C})$ connections.
\begin{theorem}\label{T1.3}
	Assume the hypotheses of Theorem \ref{T1.1}. Suppose that all flat connections on the principal $G$-bundle $P$ are $non$-$degenerate$. If $(A,\phi)$ is a $C^{\infty}$-solution of Equations (\ref{I1}), then there is a positive constant $\tilde{C}=\tilde{C}(P,g)\in(0,1]$ such that
	$$dist(A,M(P,g)):=\inf_{u\in\mathcal{G}_{P},\Ga\in M(P,g)}\|u^{\ast}(A)-\Ga\|_{L^{2}_{1}(Y)}\geq\tilde{C}$$
	unless $A$ is a flat connection.
\end{theorem}
\begin{remark}
	There are many combinations of conditions on $G,P,Y,g$ which imply that the flat connection is \textit{non-degenerate}. For example, if $Y$ is a closed, oriented Riemannian three-manifold with the homology of $S^{3}$, $P$ is a principal $SU(2)$-bundle over $Y$, then every flat connection $\Ga$ on $P$ is \textit{non-degenerate}.
\end{remark}
\begin{corollary}\label{C1.1}
	Let $Y$ be a closed, oriented, smooth Riemannian three-manifold with the homology of $S^{3}$, $P$ be a principal $SU(2)$-bundle. Let $\{(A_{i},\phi_{i})\}_{i\in\mathbb{N}}$ be a sequence of $C^{\infty}$-solutions of Equations (\ref{I1}). If the $L^{2}$-norms of the curvatures $\|F_{A_{i}}\|_{L^{2}(Y)}$ have a bound, then there is a subsequence of $\Xi\subset\mathbb{N}$ and a sequence of gauge transformations $\{u_{i}\}_{i\in\Xi}$ such that $\{(u^{\ast}_{i}(A_{i}),u^{\ast}_{i}(\phi_{i}))\}_{i\in\Xi}$ converges to a pair $(A_{\infty},\phi_{\infty})$ obeying Equations (\ref{I1}) on $P$ in the $C^{\infty}$-topology. Furthermore, there is a positive constant $\tilde{C}=\tilde{C}(P,g)\in(0,1]$ such that
	$$dist(A,M(P,g))\geq\tilde{C}$$
	unless $A$ is a flat connection.
\end{corollary}
The organization of this paper is as follows. In section 2, we first recall  the compactness theorem of Vafa-Witten equations which is proved by Tanaka \cite{Tanaka2017}. We also observe that the set of stable flat $SL(2,\mathbb{C})$ connections on a compact $3$-fold $Y$ is in one-to-one correspondence with solutions to a  $S^{1}$-invariant Vafa-Witten equations on $Y\times S^{1}$. Then by Tanaka's compactness theorem, we can prove a compactness theorem for stable flat $SL(2,\mathbb{C})$ connections. In section 3, we obtain a topological property of the moduli space of stable flat $SL(2,\mathbb{C})$ connections by  our compactness theorem.
\section{Compactness theorem for stable $SL(2,\mathbb{C})$ flat connections} 

\subsection{Vafa-Witten equations and stable $SL(2,\mathbb{C})$ flat connections}
In this section, we recall the compactness theorem of Vafa-Witten equations which is proved by Tanaka \cite{Tanaka2017}.  For an oriented $4$-dimensional Riemannian manifold $X$ with metric $g$, the Hodge star operator $\ast:\Om^{2}(X)\rightarrow\Om^{2}(X)$ induces the following splitting:
$$\Om^{2}(X)=\Om^{2,+}(X)\oplus\Om^{2,-}(X).$$
Accordingly, the space of $\mathfrak{g}_{P}$-valued two-forms $\Om^{2}(X,\mathfrak{g}_{P})$ splits as
$$\Om^{2,+}(X,\mathfrak{g}_{P})=\Om^{2,+}(X,\mathfrak{g}_{P})\oplus\Om^{2,-}(X,\mathfrak{g}_{P}).$$
At first, we begin to define the Vafa-Witten equations \cite{VW}. One also can see  Equations (2.4)--(2.5) in \cite{Tanaka2017}. We call the pair $(A,B)\in\mathcal{A}_{P}\times\Om^{2,+}(X,\mathfrak{g}_{P})$ a solution of  Vafa-Witten equations, if $(A,B)$ satisfies
$$d^{+,\ast}_{A}B=0,\ F^{+}_{A}+\frac{1}{8}[B.B]=0,$$ 
where $[B.B]\in\Om^{2,+}(X,\mathfrak{g}_{P})$ is defined in \cite[Appendix A]{BM}. Vafa-Witten equations were introduced by Vafa and Witten to study S-duality in twist of $\mathcal{N}=4$ supersymmetric Yang-Mills theory \cite{VW}. By appropriately counting the number of points of the moduli space of Vafa-Witten equations, we hope to obtain a number $VW(P)$ called the Vafa-Witten invariant for the principal bundle $P\rightarrow X$ \cite[Section 1.3]{BM}. These equations were also considered by Haydys \cite{Haydys} and Witten \cite{Witten} from a different point of view. 

Let $G$ be a Lie group and $P$ be a principal $G$-bundle over a smooth Riemannian manifold $X$. We recall the equivalent characterizations of flat bundles \cite[Section 1.2]{Kobayashi}, that is, bundles admitting a flat connection.
\begin{proposition}(\cite[Proposition 1.2.6]{Kobayashi})
	For a principal $G$-bundle $P$ over $X$, the following three conditions are equivalent:\\
	(1) $P$ admits a flat structure,\\
	(2) $P$ admits a flat connection,\\
	(3) $P$ is defined by a representation $\rho:\pi_{1}(X)\rightarrow G$. 
\end{proposition}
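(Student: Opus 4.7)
The plan is to establish the equivalence by a cycle of implications $(1)\Rightarrow(2)\Rightarrow(3)\Rightarrow(1)$, using only standard bundle theory together with the interplay between the curvature and the horizontal distribution.

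For $(1)\Rightarrow(2)$, I would start from a covering $\{U_i,\tau_i\}$ with locally constant transition functions $g_{ij}$ and define the candidate connection locally: on each trivialization $U_i\times G$, take the product connection, i.e. the one whose horizontal distribution is the kernel of the projection to $G$, represented by the connection $1$-form $A_i=0$ in the gauge $\tau_i$. The standard gauge-change rule reads $A_j=g_{ij}^{-1}A_ig_{ij}+g_{ij}^{-1}dg_{ij}$; since $dg_{ij}=0$ by hypothesis, the forms $A_i$ agree on overlaps and glue to a global connection $A$ on $P$. Its curvature is zero in each chart, hence globally, so $A$ is flat.

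For $(2)\Rightarrow(3)$, I would fix a base point $x_0\in X$ and a frame $p_0\in P_{x_0}$, and use the flat connection to define the holonomy. For a piecewise smooth loop $\gamma$ at $x_0$, parallel transport of $p_0$ along $\gamma$ produces an element $\rho(\gamma)\in G$. The key point is that $\rho(\gamma)$ depends only on the free homotopy class rel endpoints: for a smooth homotopy $H\colon[0,1]^2\to X$ between loops, the horizontal distribution on $P$ — having vanishing curvature — is involutive by Frobenius, so the horizontal lift of $H$ defines an integral surface in $P$ whose boundary matches the horizontal lifts of the two loops and forces them to end at the same point of $P_{x_0}$. Composition of loops corresponds to composition of parallel transports, so $\rho\colon\pi_1(X,x_0)\to G$ is a homomorphism; the original $P$ is then recovered (up to isomorphism) as the associated bundle from $\rho$ via the universal-cover construction in the next step.

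For $(3)\Rightarrow(1)$, given $\rho\colon\pi_1(X,x_0)\to G$, I would take the universal cover $\widetilde X\to X$ with deck group $\pi_1(X,x_0)$ and form $P_\rho:=(\widetilde X\times G)/\pi_1(X,x_0)$, with $\gamma\cdot(\widetilde x,g)=(\gamma\cdot\widetilde x,\rho(\gamma)g)$. Picking evenly covered open sets $U_i\subset X$ with chosen lifts $\widetilde U_i\subset\widetilde X$ yields trivializations of $P_\rho$ whose transition functions on a connected component of $U_i\cap U_j$ are of the form $\rho(\gamma_{ij})$ for a fixed deck transformation $\gamma_{ij}$, hence constant. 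Thus $P_\rho$ carries a flat structure, and $P\cong P_\rho$ via the holonomy isomorphism from $(2)\Rightarrow(3)$ applied to the connection furnishing condition $(2)$ (or directly as input data in $(3)$).

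The main obstacle is $(2)\Rightarrow(3)$, specifically the verification that parallel transport of the flat connection is a homotopy invariant. All the analytic content sits here: one must upgrade the infinitesimal condition $F_A=0$ to the global statement that horizontal lifts of homotopic paths share endpoints, which is exactly the Frobenius/Ambrose--Singer input. Once this is in hand the remaining implications are largely formal gluing and the associated-bundle construction.
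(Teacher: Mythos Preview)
The paper does not supply its own proof of this proposition; it is simply quoted from \cite{Kobayashi}, Proposition~1.2.6, as background. Your cycle $(1)\Rightarrow(2)\Rightarrow(3)\Rightarrow(1)$ is the standard argument (and is essentially what appears in Kobayashi): gluing product connections via locally constant transitions, homotopy invariance of holonomy for a flat connection via Frobenius integrability of the horizontal distribution, and the associated-bundle construction $(\widetilde{X}\times G)/\pi_1(X)$ from a representation. There is nothing to compare against in this paper, and your outline is correct.
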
 
Recall that, $\mathcal{F}_{\mathcal{A}}=F_{A}-\frac{1}{2}[\phi\wedge\phi]+{\rm{i}}d_{A}\phi$ is curvature of  the complex connection $\mathcal{A}:=A+\rm{i}\phi$. The solutions of stable flat  connections also satisfy the complex Yang-Mills equations \cite{Gagliardo/Uhlenbeck:2012}. We then have
\begin{proposition}
	If $(A,\phi)$ is a $C^{\infty}$-solution of (\ref{I1}) over a closed $n$-manifold $X$, then
	$$\int_{X}|\na_{A}\phi|^{2}+\int_{X}\langle  Ric\circ\phi,\phi\rangle+2\int_{X}|F_{A}|^{2}=0.$$
\end{proposition}
Now return to the setting of this article. Let $Y$ be an oriented, smooth, Riemannian three-manifold, $P$ be a $G$-principal bundle over $Y$ with $G$ being a compact Lie group. We denote by $X:=Y\times S^{1}$ the product manifold with the product metric. We pull back a connection $A$ on $P\rightarrow Y$ to $p_{1}^{\ast}(P)\rightarrow X$ via the canonical projection 
$$p_{1}: Y\times S^{1}\rightarrow Y.$$
We define a section $B\in\Ga(X,\Om^{2,+}\otimes\mathfrak{g}_{p^{\ast}_{1}(P)})$ as follows
$$B=(1+\ast^{X})\ast^{Y}p_{1}^{\ast}(\phi),$$ 
where $\ast^{Y}$ (resp. $\ast^{X}$) is the Hodge star operator with respect to metric $g_{Y}$ (resp. $g_{X}$).
We then have
\begin{proposition}
	The canonical projection gives a one-to-one correspondence between stable flat  connections on $P$ and $S^{1}$-invariant Vafa-Witten equations on the pullback bundle $p^{\ast}_{1}(P)$.
\end{proposition}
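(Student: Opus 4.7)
The plan is to view both sides of the proposition through the Hodge decomposition of forms on $X = Y \times S^1$ and verify that the Vafa-Witten equations on $p_1^{\ast}P$, restricted to $S^1$-invariant pairs $(p_1^{\ast}A,\, B)$, translate term by term into the flat $SL(2,\mathbb{C})$ equations for $(A, \phi)$. The map is $A \mapsto p_1^{\ast}A$ and $\phi \mapsto B := (1+\ast^{X})\ast^{Y} p_1^{\ast}\phi$, and as a warmup I would check that this gives a linear isomorphism between $\Omega^1(Y, \mathfrak{g}_P)$ and the space of $S^1$-invariant self-dual $\mathfrak{g}_{p_1^{\ast}P}$-valued $2$-forms on $X$. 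This is a direct linear-algebra computation: every $2$-form on $X$ splits uniquely as $\alpha + d\theta\wedge\beta$ with $\alpha \in \Omega^2(Y)$ and $\beta \in \Omega^1(Y)$ pulled back, and self-duality is equivalent to the constraint $\alpha = \ast^{Y}\beta$.

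For the moment-map equation $d_{\tilde A}^{+,\ast}B = 0$ with $\tilde A := p_1^{\ast}A$, I would first observe that self-duality of $B$ reduces it to $d_{\tilde A}^{\ast}B = 0$ and, since $\ast^{X} B = B$ on the closed $4$-fold, further to $d_{\tilde A}B = 0$. Using $B = \ast^{Y}\tilde\phi + d\theta\wedge\tilde\phi$ with $\tilde\phi := p_1^{\ast}\phi$ and the $S^1$-invariance of $\tilde A$, a Leibniz computation gives
\begin{equation}\nonumber
d_{\tilde A}B \;=\; p_1^{\ast}(d_A\ast^{Y}\phi)\; -\; d\theta\wedge p_1^{\ast}(d_A\phi).
\end{equation}
The pure-$Y$ and $d\theta$-wedge summands are linearly independent inside $\Omega^3(X, \mathfrak{g}_{p_1^{\ast}P})$, so the vanishing of $d_{\tilde A}B$ is equivalent to $d_A\phi = 0$ together with $d_A\ast^{Y}\phi = 0$, the latter being the Hodge-star rewriting of $d_A^{\ast}\phi = 0$ on the $3$-manifold $Y$.

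For the curvature equation $F_{\tilde A}^+ + \tfrac{1}{8}[B.B] = 0$, a short $\ast^{X}$ computation yields $F_{\tilde A}^+ = \tfrac{1}{2}(p_1^{\ast}F_A + d\theta\wedge\ast^{Y} p_1^{\ast}F_A)$, which is the image of $F_A$ under the natural self-dualization map $\alpha \mapsto \alpha + d\theta\wedge\ast^{Y}\alpha$ from the first paragraph, divided by two. An analogous unpacking of Mares's operation $[B.B]$ applied to $B = (1+\ast^{X})\ast^{Y}\tilde\phi$ should show that $\tfrac{1}{8}[B.B]$ equals the image of a fixed constant multiple of $-\phi\wedge\phi$ under the same map, so that matching coefficients on the two components of a self-dual $2$-form reproduces $F_A - \phi\wedge\phi = 0$ on $Y$. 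The main obstacle is precisely this identity for $[B.B]$: one must unpack its definition from \cite{BM} Appendix A in a local orthonormal basis $\omega_a = d\theta\wedge e_a + \tfrac{1}{2}\epsilon_{abc}\,e_b\wedge e_c$ of $\Lambda^{2,+}T^{\ast}X$ and verify, via the $\epsilon_{abc}$-combinatorics on the three-dimensional space of $S^1$-invariant self-dual forms, that the resulting constant absorbs exactly into the prefactor $1/8$. Once the forward direction is established, the converse follows from the same identities read backwards via the isomorphism of the first paragraph, giving the claimed bijection.
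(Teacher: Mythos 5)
Your proposal follows essentially the same route as the paper: pull back to $Y\times S^{1}$, identify $\Omega^{1}(Y,\mathfrak{g}_{P})$ with the $S^{1}$-invariant self-dual two-forms via $\phi\mapsto(1+\ast^{X})\ast^{Y}p_{1}^{\ast}\phi$, and match the two Vafa--Witten equations componentwise with $F_{A}-\frac{1}{2}[\phi\wedge\phi]=0$ and $d_{A}\phi=d_{A}^{\ast}\phi=0$ (the paper computes $d_{A}^{+,\ast}B$ directly rather than reducing to $d_{\tilde A}B=0$, but this is cosmetic). The one step you flag but do not execute --- the quadratic identity for $[B.B]$ --- is exactly the computation the paper performs in the basis $e^{01}+e^{23}$, $e^{02}+e^{31}$, $e^{03}+e^{12}$, yielding $\frac{1}{4}[B.B]=-\frac{1}{2}(1+\ast^{X})[\phi\wedge\phi]$, so your plan does close as written.
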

\begin{proof}
	The proof is similar to \cite[Lemma 8.2.2]{BM}. In a local coordinate $\{e^{1},e^{2},e^{3}\}$ of $T^{\ast}Y$, we can denote $$\phi=\phi_{1}e^{1}+\phi_{2}e^{3}+\phi_{3}e^{3}.$$
	Note that
	$$\frac{1}{2}[\phi\wedge\phi]=[\phi_{2},\phi_{3}]e^{23}+[\phi_{3},\phi_{1}]e^{31}+[\phi_{1},\phi_{2}]e^{12}.$$
	Then by the definition of $B$, we get $$B=\phi_{1}(e^{01}+e^{23})+\phi_{2}(e^{02}+e^{31})+\phi_{3}(e^{03}+e^{12}),$$ and $$-\frac{1}{4}[B.B]=[\phi_{2},\phi_{3}](e^{01}+e^{23})+[\phi_{3},\phi_{1}](e^{02}+e^{31})+[\phi_{1},\phi_{2}](e^{0,3}+e^{12}).$$
	Thus $$\frac{1}{4}[B.B]=-\frac{1}{2}(1+\ast^{X})[\phi\wedge\phi].$$ 
	We also observe that
	$$2F^{+}_{A}=(1+\ast^{X})F_{A}.$$
	Therefore, we have
	$$F^{+}_{A}+\frac{1}{8}[B.B]=(1+\ast^{X})(F_{A}-\frac{1}{2}[\phi\wedge\phi])=0.$$
	We also have an other equation $$d^{+,\ast}_{A}B=\ast^{Y}d_{A}\phi-e^{0}\wedge d_{A}^{\ast}\phi=0.$$
\end{proof}
\subsection{Compactness theorem for Vafa-Witten equations}
Mares studied the analytic aspects of Vafa-Witten equations in \cite{BM}. We don't have an $L^{2}$-bounded on the curvature $F_{A}$ of a connection $A$ which satisfies the Vafa-Witten equations as in the case of Hitchin-Simpson's equations \cite{Hitchin}. Mares observed that if $(A,B)$ is a solution of Vafa-Witten equations and the $L^{2}$-norm of $B$ has a uniform bound, then the curvature $F_{A}$ also has a uniform bound in $L^{2}$-norm by the following identity
\begin{equation*}
\begin{split}
\frac{1}{2}\|d_{A}^{+,\ast}B\|^{2}_{L^{2}(X)}&+\|F_{A}^{+}+\frac{1}{8}[B.B]\|^{2}_{L^{2}(X)}=\int_{X}(\frac{1}{12}s|B|^{2}-\frac{1}{2}W^{+}\langle B\odot B \rangle)\\
&+\frac{1}{2}\|F_{A}\|^{2}_{L^{2}(X)}+\frac{1}{4}\|\na_{A}B\|^{2}_{L^{2}(X)}+\frac{1}{16}\|[B.B]\|^{2}_{L^{2}(X)}+\frac{1}{2}\int_{X}tr(F_{A}\wedge F_{A}),\\
\end{split}
\end{equation*}
where $\odot$ denotes some bilinear on $\Om^{2,+}(X,\mathfrak{g}_{P})\otimes\Om^{2,+}(X,\mathfrak{g}_{P})$, $s$ is the scalar curvature of the metric, and $W^{+}$ is the self-dual part of the Weyl curvature of the metric (see \cite[Page 1204]{Tanaka2017} or \cite[Section B.4]{BM} for more details). Following Uhlenbeck compactness theorem, he obtained a compactness theorem of Vafa-Witten equations under the extra fields $B$ have a bound in $L^{2}$-norm \cite{BM}.

For a sequence of connections $\{A_{i}\}$ on $P$, Tanaka defined a set $S(\{A_{i}\})$ as follows:
\begin{equation}\label{E1}
S(\{A_{i}\}):=\bigcap_{r\in(0,\rho)}\{x\in X:\liminf_{i\rightarrow\infty}\int_{B_{r}(x)}|F_{A_{i}}|^{2}\geq\varepsilon_{0}\},
\end{equation}
where $\varepsilon_{0}$ is a positive constant which is defined as in \cite{Tanaka2017}. This set $S(\{A_{i}\})$ describes the singular set of a sequence of connections $\{A_{i}\}$. In \cite{Tanaka2017}, Tanaka observed that under the particular circumstance where the connections are non-concentrating and the limiting connection is \textit{non-locally reducible}, one obtains an $L^{2}$-bound on the extra fields. Here, we say that a connection $A$ on a principal $SU(2)$ or $SO(3)$ bundle $P$ is \textit{locally reducible} if the vector bundle $\mathfrak{g}_{P}$ has a one-dimensional subbundle that is $A$-covariantly constant, See \cite[Definition 2.1]{Tanaka2017}. Note that a connection on a principal $SU(2)$ or $SO(3)$ bundle $P$ being locally reducible is the same as being honestly reducible if $X$ is simply connected.  The following is an analogue of the second part of \cite[Theorem 1.1]{Taubes2017}, but under the assumption that $S(\{A_{i}\})$ is empty.
\begin{theorem}(\cite[Theorem 1.2]{Taubes2017}  and \cite[Proposition 4.4]{Tanaka2017} )\label{T4.6}
	Let $\{(A_{i},B_{i})\}$ be a sequence solutions of Vafa-Witten equations, set $r_{i}:=\|\phi_{i}\|_{L^{2}(X)}$. Let $\delta$ denote the injectivity radius of $X$. Suppose that there exist $r\in(0,\delta)$ and a sequence $\Xi\subset\mathbb{N}$ such that  
	$$\int_{B_{r}(x)}|F_{A_{i}}|^{2}<\kappa^{-2}$$
	for every $i\in\Xi$ and $x\in X$. Assume that the sequence $\{r_{i}\}_{i\in\mathbb{N}}$ has no bounded subsequence. Then there exist  a closed, nowhere dense set $Z\subset X$, a real line bundle $\mathcal{I}\rightarrow X-Z$, a section $\nu\in\Ga(X-Z,\mathcal{I}\otimes\Om^{2,+})$, a connection $A_{\De}$ on $P\mid_{X-Z}$, and an isometric bundle homomorphism $\sigma_{\De}:\mathcal{I}\rightarrow\mathfrak{g}_{P}$. Their properties are listed below:\\
	(a) $Z$ is the zero locus of $|\nu|$,\\
	(b) The function $|\nu|$ is H\"{o}lder continuous $C^{0,1/\kappa}$ on $X$,\\
	(c) The section $v$ is harmonic in the sense of $d\nu=0$,\\
	(d) $|\na\nu|$ is an $L^{2}$-function on $X-Z$ that extends as an $L^{2}$-function on $X$,\\
	(e) The curvature tensor of $A_{\De}$ is anti-self-dual,\\
	(f) The homomorphism $\sigma_{\De}$ is $A_{\De}$-covariantly constant.\\
	In addition, there exist a subsequence $\Lambda\subset\Xi$ and a sequence $u_{i}$ of automorphisms from $P$ such that\\
	(i) $\{u_{i}^{\ast}(A_{i})\}$ converges to $A_{\De}$ in the $L^{2}_{1}$ topology on compact subset in $X-Z$ and\\
	(ii) The sequence $\{r^{-1}_{i}u^{\ast}_{i}(B_{i})\}$ converges to $v\otimes\sigma_{\De}$ in the $L^{2}_{1}$-topology  on compact subset in $X-Z$ and the $C^{0}$-topology on $X$. Meanwhile, $\{r_{i}^{-1}|B_{i}|\}$ converges to $|\nu|$ in the weakly $L^{2}_{1}$-topology and the $C^{0}$-topology on the whole of $X$.
\end{theorem}
\subsection{Proof of our results}
In this section, we give the proof of our main result. At first, we observe that
\begin{proposition}
	Let $Y$ be a closed, oriented, smooth Riemannian three-manifold, $P$ be a principal $G$-bundle with $G$ being a compact Lie group. Let $\{A_{i}\}_{i\in\mathbb{N}}$ be a sequence $C^{\infty}$-connections on $P$ with the $L^{2}$-norms of the curvatures $\|F_{A_{i}}\|_{L^{2}(X)}$ have a uniform bound. We denote by $\{\textbf{A}_{i}\}$ the pullback $S^{1}$-invariant connections. Then the set $S(\{\textbf{A}_{i}\})$ is empty, where $S(\cdot)$ is defined in Equation (\ref{E1}).
\end{proposition}
\begin{proof}
	For a point $(y_{0},\theta_{0})\in Y\times S^{1}$, we denote by  \begin{equation}\nonumber
	\begin{split}
	B_{r}(y_{0},\theta_{0})&:=\{(y,\theta):|y-y_{0}|_{g_{Y}}^{2}+|\theta-\theta_{0}|^{2}<r^{2}\}\\
	&\subset (-r+\theta_{0},r+\theta_{0})\times B_{r}(y_{0})\\
	\end{split}
	\end{equation}
	the geodesic ball on $Y\times S^{1}$. Hence, we have
	\begin{equation}\nonumber
	\begin{split}
	\|F_{\textbf{A}_{i}}\|^{2}_{L^{2}(B_{r}(y_{0},\theta_{0}))}&=\int_{B_{r}(y_{0},\theta_{0})}|F_{\textbf{A}_{i}}|^{2}dvol_{g_{Y}}d\theta\\
	&\leq\int_{-r+\theta_{0}}^{r+\theta_{0}}d\theta\int_{B_{r}(y_{0})}|F_{A_{i}}|^{2}\\
	&\leq2r\sup_{i}\|F_{A_{i}}\|^{2}_{L^{2}(Y)}.\\
	\end{split}
	\end{equation}
	We can choose $r$ sufficiently small such that $$2r\sup\|F_{A_{i}}\|_{L^{2}(Y)}<\kappa^{-2},$$
	where $\kappa$ is the constant on Theorem \ref{T4.6}. Therefore, we complete this proof.
\end{proof}
Following the idea in the proof of \cite[Theorem 1.2]{Taubes2014}, we can obtain a compactness theorem for the stable flat $SL(2,\C)$ connections on three-manifold. 
\begin{theorem}\label{T1}
	Let $Y$ be a closed, oriented, smooth Riemannian three-manifold, $P$ be a principal $G$-bundle with $G$ being $SU(2)$ or $SO(3)$. Let $\{(A_{i},\phi_{i})\}_{i\in\mathbb{N}}$ be a sequence of $C^{\infty}$-solutions of Equations (\ref{I1}), set $r_{i}:=\|\phi_{i}\|_{L^{2}(X)}$. Suppose that the $L^{2}$-norms of the curvatures $\|F_{A_{i}}\|_{L^{2}(Y)}$ have a uniform bound and the sequence $\{r_{i}\}_{i\in\mathbb{N}}$ has no bounded subsequence. Then there exist  a closed, nowhere dense set $Z_{Y}\subset Y$, a real line bundle $\mathcal{I}_{Y}\rightarrow Y-Z_{Y}$, a section $\nu\in\Ga(Y-Z_{Y},\mathcal{I}_{Y}\otimes\Om^{1})$, a connection $A_{\De}$ on $P\mid_{Y-Z_{Y}}$, and  an isometric bundle homomorphism $\sigma_{\De}:\mathcal{I}_{Y}\rightarrow\mathfrak{g}_{P}$. Their properties are listed below:\\
	(a) $Z_{Y}$ is the zero locus of $|\nu|$,\\
	(b) The section $v$ is harmonic in the sense of $d\nu=d^{\ast}\nu=0$,\\
	(c) The curvature tensor of $A_{\De}$ is flat,\\
	(d) The homomorphism $\sigma_{\De}$ is $A_{\De}$-covariantly constant.\\
	In addition, there exist a subsequence $\Lambda\subset\Xi$ and a sequence $u_{i}$ of automorphisms from $P$ such that\\
	(i) $\{u_{i}^{\ast}(A_{i})\}$ converges to $A_{\De}$ in the $L^{2}_{1}$ topology on compact subset in $Y-Z_{Y}$ and\\
	(ii) The sequence $\{r^{-1}_{i}u^{\ast}_{i}(\phi_{i})\}$ converges to $\nu\otimes\sigma_{\De}$ in the $L^{2}_{1}$ topology on compact subset in $Y-Z_{Y}$ and the $C^{0}$-topology on $Y$. Meanwhile, $\{r_{i}^{-1}|\phi_{i}|\}$ converges to $|\nu|$ in the weakly $L^{2}_{1}$-topology and the $C^{0}$-topology on the whole of $Y$.
\end{theorem}
\begin{proof}
	As explained momentarily, this theorem constitutes a special case to Theorem \ref{T4.6}. To obtain Theorem \ref{T1} from Theorem \ref{T4.6}, we take $X$ in Theorem \ref{T1} to be the product $Y\times S^{1}$ with the metric being the product metric. The pull-back of the principal $G$-bundle $P$ on $Y$ to $X$ via the projection map to $Y$ defines a principal $G$-bundle over $X$, the latter is denoted also by $P$. Let $\{ (A_{i},\phi_{i})\}$ be a sequence solutions of stable flat $SL(2,\mathbb{C})$ connections over $Y$. For simplicity we keep the same notations for objects on $Y$ and their pullbacks to $X$. We denote $B_{i}=(1+\ast^{X})\ast^{Y}\phi_{i}$. If suppose that the sequence $\{r_{i}:=\|\phi_{i}\|_{L^{2}(Y)}\}_{i\in\mathbb{N}}$ has no bounded subsequence, then $\|B_{i}\|_{L^{2}(X)}$ also has no bounded subsequence. A similar sort of argument can be used to prove that Theorem \ref{T4.6}'s set $Z$ is the product of $S^{1}$ and a closed set $Z_{Y}\subset Y$ and that Theorem \ref{T4.6}'s real line bundle $\mathcal{I}$ is isomorphic to the pull-back via the projection map of a real line bundle defined on the complement in $Y$ of $Z_{Y}$, this denoted for now by $\mathcal{I}_{Y}$. Moreover, such an isomorphism identifies Theorem \ref{T4.6}'s version of $\nu$ with the pull-back of a harmonic, $\mathcal{I}_{Y}$ valued 1-form on $Y-Z_{Y}$ with $Z_{Y}$ denoting the locus where its norm is zero.
\end{proof}
\begin{remark}
	Taubes considered a sequence of complex connections  $\{ \mathcal{A}_{i}:=A_{i}+{\rm{i}}\phi_{i}\}$ such that the $L^{2}$-norms of $\{F_{\mathcal{A}_{i}}\}$ are bounded. If $\{\|\phi_{i}\|_{L^{2}(X)}\}$ has no bounded sequence, then Taubes makes sense of the limit as a $\mathbb{Z}_{2}$ harmonic spinor \cite{Taubes2014}. In our result,  we add the conditions that all connections $\{\mathcal{A}_{i}\}$ are  stable flat $SL(2,\mathbb{C})$ connection and the real curvatures $\{F_{A_{i}} \}$ have $L^{2}$-bounded, then we prove the limit as a decoupled stable flat $SL(2,\mathbb{C})$ connection.
\end{remark}
The next theorem is a special case of Theorem 1.1b in \cite{Taubes2014}. It implies among other things that $Z_{Y}$ has measure zero. To set the notation for this upcoming theorem. A point $p\in Z_{Y}$ is a point of discontinuity for $\mathcal{I}_{Y}$, if $\mathcal{I}_{Y}$ is not isomorphic to the product bundle on the complement of $Z_{Y}$ in any neighborhood of $p$ \cite{Taubes2014}. 
\begin{theorem}(\cite[Theorem 1.1b]{Taubes2014})\label{T3}
	Let $Z_{Y}$ and $\mathcal{I}_{Y}$ be as described in Theorem \ref{T1}. The set $Z_{Y}$ has Hausdorff dimension at most 1, and moreover, the set of the points of discontinuity for $\mathcal{I}_{Y}$ (defined in the preceding paragraph) are the points in the closure of an open subset of $Z_{Y}$ that is an embedded $C^{1}$ curve in $Y$ denoted by $\Sigma$.
\end{theorem} Uhlenbeck's  \cite{Uhlenbeck1982} theorem applies to the connections on $P$ and in particular makes the following assertion:\\
\textbf{Uhlenbeck's Theorem}: Let $\{A_{i}\}_{i\in\mathbb{N}}$ be a sequence of connections on $P$ over a closed, oriented, $3$-manifold. If $L^{2}$-norms of the curvatures $F_{A_{i}}$ of the connections $\{A_{i}\}$ have a uniform bound, then there is a subsequence $\Xi\subset\mathbb{N}$ and a sequence of gauge transformations $\{u_{i}\}_{i\in\Xi}$ such that $\{u^{\ast}_{i}(A_{i})\}$ converges weakly in the $L^{2}_{1}$-topology to a connection $A_{\infty}$ on $P$. 

By the priori estimate (\ref{E1.2}), we then have
\begin{theorem}(\cite[Theoreom 1.1a]{Taubes2014})\label{T2.10}
	Let $Y$ be a closed, oriented, smooth Riemannian three-manifold, $P$ be a principal  $SU(2)$ or $SO(3)$-bundle over $Y$. Let $\{(A_{i},\phi_{i})\}_{i\in\mathbb{N}}$ be a sequence of $C^{\infty}$-solutions of Equations (\ref{I1}). Suppose that the sequence $\{\|\phi_{i}\|_{L^{2}(Y)}\}$ has a bounded subsequence. Then there is a subsequence of $\Xi\subset\mathbb{N}$ and a sequence of gauge transformations $\{u_{i}\}_{i\in\Xi}$ such that $\{(u^{\ast}_{i}(A_{i}),u_{i}^{\ast}(\phi_{i}))\}_{i\in\Xi}$ converges to a pair $(A_{\infty},\phi_{\infty})$ obeying Equations (\ref{I1}) on $P$ in $C^{\infty}$-topology. 
\end{theorem}
We are finally ready to use the above results in the following proposition.
\begin{proposition}\label{P1}
	Let $Z_{Y}$ and $\mathcal{I}_{Y}$ be as described in Theorem \ref{T1}, so that $\sigma_{\De}$ and $A_{\De}$ are defined over $Y-Z_{Y}$. Then\\
	(1) There exists a smooth flat connection $A_{\infty}$ defined over all of $Y$, and a Sobolev class $L^{2}_{2}$ gauge transformation $u_{\infty}$ defined over $Y-Z_{Y}$ such that $u_{\infty}^{\ast}(A_{\infty})$ is restriction to $Y-Z_{Y}$ of $A_{\infty}$. Defining $\sigma_{\infty}:=u_{\infty}^{\ast}(\sigma_{\De})$ over $Y-Z_{Y}$, then $\na_{A_{\infty}}\sigma_{\infty}=0$.\\
	(2) The bundle $\mathcal{I}_{Y}$ over $Y-Z_{Y}$ extends to a bundle defined over all of $Y$, which we again denote by $\mathcal{I}_{Y}$,\\
	(3) There exist extensions of both $v\in\Gamma(\mathcal{I}_{Y}\otimes\Om^{1})$ and $\sigma_{\infty}:\mathcal{I}_{Y}\rightarrow\mathfrak{g}_{P}$ to all of $Y$. We again denote these by $v$ and $\sigma_{\infty}$. The extensions satisfy $dv=0$ and $\na_{A_{\infty}}\sigma_{\infty}=0$.
\end{proposition}
\begin{proof}
	The idea of our proof is similar to \cite[Proposition 4.6]{Tanaka2017}. 
	
	We first prove item 1.  Following weak Uhlenbeck compactness theorem (see \cite[ Theorem A]{Wehrheim}), for any sequence $\{A_{i}\}_{i\in N}$ with bounded $L^{2}$-curvature $\{F_{A_{i}}\}_{i\in\N}$ on a principal $G$-bundle over a closed three-manifold, there exists a subsequence  (again denote $\{A_{i}\}_{i\in\N}$ ) and a sequence of gauge transformations $\{u_{i}\}_{i\in\N}$ such that $u^{\ast}_{i}(A_{i})$  converges weakly to a limit connection $A_{\infty}$  over all of $Y$ in $L^{2}_{1}$. Recall from Theorem \ref{T1} that $A_{\De}$ is the $L^{2}_{1}$ limit over compact subset $Y-Z_{Y}$ of gauge equivalent connections. Since weakly $L^{2}_{1}$ limits preserve $L^{2}_{2}$ gauge equivalence, it follows there exists a Sobolev-class $L^{2}_{2}$ gauge transformation $u_{\infty}$ such that $u_{\infty}^{\ast}(A_{\De})=A_{\infty}$.
	
	Note that $A_{\De}$ is flat and gauge-equivalent over the complement of $Z_{Y}$ to $A_{\infty}$. Thus, $A_{\infty}$ is a $L^{2}_{1}$ connection whose curvature is $L^{2}$, and vanishes on the complete of $Z_{Y}$, which by Theorem \ref{T3} is a set of measure zero. Hence the curvature of $A_{\infty}$ is flat and so a standard elliptic regularity argument can be used to prove that there is an $L^{2}_{2}$ and $C^{0}$ automorphism of $P$ that transforms $A_{\infty}$ into a smooth flat connection. After possibly composing $u_{\infty}$ with such an automorphism, we may assume without loss of generality that $A_{\infty}$ is smooth and that $u_{\infty}$ is continuous. That $\na_{A_{\infty}}\sigma_{\infty}=0$ follows from Theorem \ref{T1} since $\sigma_{\De}$ is $A_{\De}$-covariantly constant. This establishes the item 1.
	
	We next prove the item 2 that $\mathcal{I}_{Y}$ extends over $Z_{Y}$. Let $\Sigma\subset Z_{Y}$ denote the $C^{1}$ submanifold that is described by Theorem \ref{T3}. It is enough to prove that $\Sigma$ is empty. For this purpose, assume to the contrary that $\Sigma\neq\emptyset$ and let $S\subset\Sigma$ be a component. This is a $C^{1}$ embedded curve. Fix a point $p\in S$. Since $S$ is $C^{1}$, there is an embedded disk  $D\subset Y$ closure intersects $S$ transversally at a single point which is $p$. This is also its only intersection point with $Z_{Y}$ since $S$ is an open subset of $Z_{Y}$. Since $p$ is a point of discontinuity for the bundle $\mathcal{I}_{Y}$, the restriction of $\mathcal{I}_{Y}$ to $D-\{p\}$ is not isomorphic to the product line bundle. In particular, parallel transport by $A_{\De}$ of $\sigma_{\De}$ along any circle in $D-\{p\}$ which wraps once around $p$ gives $-\sigma_{\De}$. However, $A_{\De}$ is gauge-equivalent to a connection which is smooth over all of $D$. This parallel transport around sufficiently small bounded interval will be arbitrarily close to $+\sigma_{\De}$, which is a contradiction.
	
	Finally, we prove item 3 by showing that both $v$ and $\sigma_{\infty}$  extend to all of $Y$ as $L^{2}_{1}$ sections. Granted this extension, we may argue as in item 1 that both $dv$ and $\na_{A_{\infty}}\sigma_{\infty}$ are $L^{2}$ sections which vanish almost everywhere, and hence by elliptic regularity, $v$ and $\sigma_{\infty}$ are smooth and satisfy $dv=0$ and $\na_{A_{\infty}}\sigma_{\infty}=0$ over all of $Y$.
\end{proof}
Following above results, we can prove a Uhlenbeck-type compactness theorem on $Y$ for stable flat $SL(2,\mathbb{C})$ connections satisfying an $L^{2}$-bound for the real curvature.\\
\begin{proof}[\textbf{Proof of Theorem \ref{T1.1}}]
	We set $r_{i}:=\|\phi_{i}\|_{L^{2}(Y)}$. At first, we can prove that there exists a subsequence $\Xi\subset\N$ such that $\{r_{i}\}_{i\in\Xi}$ have a uniform bound. If not, then the sequence $\{r_{i}\}_{i\in\mathbb{N}}$ has no bounded subsequence. We denote $A_{\infty}$, $\nu$ and $\sigma_{\infty}$ as described in Proposition \ref{P1}. Hence, following Proposition \ref{P1}, we have
	$$d_{A_{\infty}}(\nu\otimes\sigma_{\infty})=d\nu\otimes\sigma_{\infty}-\nu\otimes\na_{A_{\infty}}\sigma_{\infty}=0$$ and
	$$d_{A_{\infty}}\ast(\nu\otimes\sigma_{\infty})=d\ast \nu\otimes\sigma_{\infty}+\ast\nu\otimes \na_{A_{\infty}}\sigma_{\infty}=0.$$
	Since $\nu\otimes\sigma_{\infty}\in\Om^{1}(Y,\mathfrak{g}_{P})$, the hypothesis of the flat connection $A_{\infty}$ implies that $$\nu\otimes\sigma_{\infty}=0.$$
	Following the item 1 in Proposition \ref{P1}, there exist a continuous Sobolev-class $L^{2}_{2}$ gauge transformation $u_{\infty}$ defined over $Y-Z_{Y}$ such that
	$$(u^{-1}_{\infty})^{\ast}(A_{\infty})=A_{\De}$$ and $$(u^{-1}_{\infty})^{\ast}(\sigma_{\infty})=\sigma_{\De}.$$ 
	Hence
	$$\nu\otimes\sigma_{\De}=(u_{\infty}^{-1})^{\ast}(\nu\otimes\sigma_{\infty})=0$$ on $Y-Z_{Y}$. The zero locus of the extension of $|\nu|$ is the set $Z_{Y}$. And we can set $\sigma_{\De}$ is a unit length, $A_{\De}$-covariantly constant homomorphism over $Y-Z_{Y}$. Hence, we can say $|\nu|=0$ on $Y$. 
	
	On the other hand, following the last item in Theorem \ref{T1}, there exist a subsequence $\Xi\subset\mathbb{N}$ and a sequence $\{u_{i}\}_{i\in\Xi}$ of automorphisms from $P$ such that $\{r^{-1}_{i}u^{\ast}_{i}(\phi_{i})\}$ converges to $\nu\otimes\sigma_{\De}$ in the $L^{2}_{1}$-topology on compact subset in $Y-Z_{Y}$ and the $C^{0}$-topology on $Y$. Meanwhile, $\{r_{i}^{-1}|\phi_{i}|\}$ converges to $|\nu|$ in the weakly $L^{2}_{1}$-topology and the $C^{0}$-topology on the whole of $Y$. Hence $$\lim_{i\rightarrow\infty}r^{-1}_{i}\|\phi_{i}\|_{L^{2}(Y)}=0.$$
	It's contradicting the fact $r^{-1}_{i}\|\phi_{i}\|_{L^{2}(Y)}=1$, $\forall i\in\mathbb{N}$. In particular, the preceding argument shows that there exists a subsequence $\Xi\subset\mathbb{N}$ such that $\{r_{i}\}_{i\in\Xi}$ have a uniform bound. Thus following Theorem \ref{T2.10}, there is a subsequence (again denote by $\{(A_{i},\phi_{i})\}_{i\in\Xi}$) and a sequence of gauge transformation $\{u_{i}\}_{i\in\Xi}$ such that  $\{(u^{\ast}_{i}(A_{i}),u^{\ast}_{i}(\phi_{i}))\}_{i\in\Xi}$ converges to a pair $(A_{\infty},\phi_{\infty})$ on $P$ in the $C^{\infty}$-topology.
\end{proof}
\begin{corollary}
	Let $Y$ be a closed, oriented Riemannian three-manifold with the homology of $S^{3}$, $P$ be a principal $SU(2)$-bundle. Let $\{(A_{i},\phi_{i})\}_{i\in\mathbb{N}}$ be a sequence of $C^{\infty}$ solutions of Equations (\ref{I1}). If the $L^{2}$-norms of the curvatures $\|F_{A_{i}}\|_{L^{2}(Y)}$ have a uniform bound, then there is a subsequence of $\Xi\subset\mathbb{N}$ and a sequence of gauge transformations $\{u_{i}\}_{i\in\mathbb{N}}$ such that $\{(u^{\ast}_{i}(A_{i}),u^{\ast}_{i}(\phi_{i}))\}_{i\in\Xi}$ converges to a pair $(A_{\infty},\phi_{\infty})$ obeying Equations (\ref{I1}) on $P$ in the $C^{\infty}$-topology.
\end{corollary}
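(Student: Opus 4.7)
The plan is to derive this corollary as a direct specialization of Theorem \ref{T1.1}. That theorem has three hypotheses: the structure group is $SU(2)$ or $SO(3)$, the sequence $\|F_{A_{i}}\|_{L^{2}(Y)}$ is bounded, and every flat connection on $P$ is non-degenerate in the sense of Definition \ref{D3.6}. The first two are built into the setup of the corollary, so the only work is to verify the non-degeneracy condition.

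Non-degeneracy in this setting is precisely the content of the preceding proposition, whose argument I would reproduce in the following short form. Every principal $SU(2)$-bundle over a closed oriented $3$-manifold has vanishing second Chern class (since $H^{4}(Y,\Z)=0$) and is therefore isomorphic to the trivial bundle $Y\times SU(2)$, which also trivializes $\mathfrak{g}_{P}\cong Y\times\mathfrak{su}(2)$. For any flat connection $\Ga$ on $P$, Hodge theory identifies $\ker\De_{\Ga}|_{\Om^{1}(Y,\mathfrak{g}_{P})}$ with a space of twisted harmonic $1$-forms with values in $\mathfrak{g}_{P}$; after the trivialization this reduces to three copies of the ordinary space of harmonic $1$-forms on $Y$, and on a homology $3$-sphere this vanishes since $H^{1}(Y,\mathbb{R})=0$ (equivalently, by Poincar\'e duality, $H^{2}(Y,\mathbb{R})=0$).

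With non-degeneracy in hand, an application of Theorem \ref{T1.1} yields a subsequence $\Xi\subset\N$ and a sequence of gauge transformations $\{g_{i}\}$ such that $\{(g_{i}^{\ast}A_{i},g_{i}^{\ast}\phi_{i})\}_{i\in\Xi}$ converges in the $C^{\infty}$-topology to a pair $(A_{\infty},\phi_{\infty})$ obeying (\ref{I1}), which is exactly the desired conclusion. No genuine obstacle arises here: the corollary is essentially a restatement of the main theorem under simpler hypotheses on $Y$ and $P$, with the preceding proposition serving as the bridge. The non-triviality assumption on the solutions $(A_{i},\phi_{i})$ plays no role in the convergence argument itself and is presumably imposed only to rule out the uninteresting case $\phi_{i}\equiv 0$, which would reduce the statement to the classical Uhlenbeck compactness for flat connections.
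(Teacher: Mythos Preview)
Your proposal is correct and follows exactly the paper's (implicit) approach: the corollary is an immediate consequence of the preceding proposition, which establishes non-degeneracy of flat connections on $P$ when $Y$ is a homology $3$-sphere and $G=SU(2)$, together with Theorem~\ref{T1.1}. You even reproduce the proposition's short argument essentially as the paper gives it, so there is nothing to add.
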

\section{ Disconnectedness of the moduli space $\mathcal{M}(P,g)$}
\subsection{A lower positive bound of extra fields}
We call a stable flat connection $\mathcal{A}:=A+\rm{i}\phi$ \textit{decoupled}, if the real connection $A$ is flat and the extra field $\phi$ is a harmonic $\mathfrak{g}_{P}$-$1$-form with respect to $d_{A}+d^{\ast}_{A}$, i.e, 
$$F_{A}=0\ and\ d_{A}\phi=d^{\ast}_{A}\phi=0.$$ 
Using a result of Uhlenbeck \cite{Uhlenbeck1985}, the author observed that if the stable flat connection $\mathcal{A}$ over a closed, smooth, Riemannian three-manifold $Y$, then  the $L^{2}$-norm of extra fields has a uniform positive lower bound unless the real connection is flat.
\begin{theorem}\label{T3.7}({\cite{Huang}})
	Let $Y$ be a closed, oriented, Riemannian three-manifold and endowed with a smooth Riemannian metric $g$, $P$ be a principal $G$-bundle with $G$ being a compact Lie group. If $(A,\phi)$ is a $C^{\infty}$-solution of equations (\ref{I1}), then there is a positive constant $C=C(X,g,G)$ such that  $$\|\phi\|_{L^{2}(Y)}\geq C$$ unless $A$ is a flat connection.
\end{theorem}
Suppose that all flat connections on $P$ are \textit{non-degenerate}. Then the extra fields vanish if the stable flat connection $A+\rm{i}\phi$ is \textit{decoupled} over a closed Riemannian manifold.  Following Theorem \ref{T3.7}, we then have
\begin{corollary}\label{C1}
	Assume the hypothesis of Theorem \ref{T3.7}. Suppose that all flat connections on the principal bundle $P$ are $non$-$degenerate$.  If $(A,\phi)$ is a $C^{\infty}$-solution of equations (\ref{I1}), then either there exists a positive constant $C=C(X,g,G)$ such that $$\|\phi\|_{L^{2}(Y)}\geq C$$ or $\phi$ vanishes.
\end{corollary}
\subsection{A lower positive bound of curvatures}
One can see that  $M(P,g)$ is the space of real flat connections and  $\mathcal{M}'(P,g):=\mathcal{M}(P,g)\backslash M(P,g)$ is the space of real connections are non-flat. Hence we can denote by 
\begin{equation}\nonumber
\begin{split}
dist(M(P,g),\mathcal{M}'(P,g)):&=\inf_{u\in\mathcal{G}_{P},\Ga\in M(P,g)}\|u^{\ast}(A,\phi)-(\Ga,0)\|\\
&=\big{(}\inf_{u\in\mathcal{G}_{P},\Ga\in M(P,g)}\|u^{\ast}(A)-\Ga\|^{2}_{L^{2}_{1}(X)}+\|\phi\|^{2}_{L^{2}_{1}(X)}\big{)}^{\frac{1}{2}}\\
\end{split}
\end{equation}
the distance between $M(P,g)$ and $\mathcal{M}'(P,g)$. Following Theorem \ref{T3.7}.  We can obtain a topological property of the moduli space $\mathcal{M}(P,g)$.
\begin{proposition}( Disconnectedness of the moduli space $\mathcal{M}(P,g)$). Let $Y$ be a closed, oriented, smooth, Riemannian three-manifold, $P$ be a principal $G$-bundle with $G$ being a compact Lie group. Suppose that all flat connections on the principal bundle $P$ are $non$-$degenerate$. If the moduli spaces $M(P,g)$  and $\mathcal{M}'(P,g):=\mathcal{M}(P,g)\backslash M(P,g)$ are all non-empty, then the moduli space $\mathcal{M}(P,g)$ is disconnected.
\end{proposition}
\begin{proof}
	Under the hypothesis of the flat connection, following the Corollary \ref{C1}, the $L^{2}$-norm of the extra field $\|\phi\|_{L^{2}(X)}$ has a lower bound unless $\phi$ vanishes. If the moduli spaces $M(P,g)$  and $\mathcal{M}(P,g)\backslash M(P,g)$ are all non-empty, then $$dist(M(P,g),\mathcal{M}'(P,g)\geq\|\phi\|_{L^{2}(Y)}\geq C,$$ 
	where $C=C(Y,g)$ is the positive constant in Corollary \ref{C1}, i.e., the moduli space $\mathcal{M}(P,g)$ is disconnected.
\end{proof}
We extend the idea in \cite{Huang3} to stable flat $SL(2,\mathbb{C})$ connection case, we prove a gap result of the real curvature following the compactness theorem \ref{T1.1}. 
\begin{proposition}\label{P4.3}
	Let $Y$ be a closed, oriented, smooth, Riemannian three-manifold, $P$ be a principal $G$-bundle with $G$ being $SU(2)$ or $SO(3)$. Suppose that all flat connections on the principal bundle $P$ are $non$-$degenerate$ . If the pair $(A,\phi)$ is a $C^{\infty}$-solution of equations (\ref{I1}), then there is a positive constant $C=C(Y,g,P)$  such that $$\|F_{A}\|_{L^{2}(Y)}\geq C$$ unless the real connection $A$ is flat.
\end{proposition}
\begin{proof}
	Suppose that the constant $C$ does not exist. We may then choose a sequence $\{(A_{i},\phi_{i})\}_{i\in\N}$ such that $\|F_{A_{i}}\|_{L^{2}(Y)}\rightarrow0\ as\ i\rightarrow\infty$,
	and $\{A_{i}\}_{i\in\N}$ are all non-flat. Thus the compactness Theorem \ref{T1.1} implies that there exists a pair $(A_{\infty},\phi_{\infty})$ obeys the equations (\ref{I1}) and there is a sequence of gauge transformations $\{u_{i}\}_{i\in\Xi}$ such that
	$(u_{i}^{\ast}(A_{i}),u_{i}^{\ast}(\phi_{i}))\rightarrow (A_{\infty},\phi_{\infty})$ in $C^{\infty}$ over  $Y$. Following  Theorem \ref{T3.7}, the $L^{2}$-norm of extra field $\|\phi_{\infty}\|_{L^{2}X}$ has a positive lower bound. Therefore, we have
	$$\|\phi_{\infty}\|^{2}_{L^{2}(Y)}=\lim_{i\rightarrow\infty}\int_{Y}|\phi_{i}|^{2}\geq \title{C},$$ 
	where $\title{C}=\title{C}(Y,g)$ is a positive constant. 
	
	On the other hand, since $\|F_{A_{i}}\|_{L^{2}(Y)}\rightarrow0$, the weak Uhlenbeck compactness theorem implies that the connection $A_{\infty}$ on $P$ is flat. Hence $A_{\infty}$ is \textit{non-degenerate} by the hypothesis on this proposition. It implies that the extra field $\phi_{\infty}\equiv0$. It's contradiction with $\|\phi_{\infty}\|_{L^{2}(Y)}$ has a uniform positive lower bound. 
\end{proof}
\begin{remark}
	The solutions of stable flat connections also satisfy the complex Yang-Mills equations \cite{Gagliardo/Uhlenbeck:2012}. The author proved that if the pair $(A,\phi)$ is a smooth solution of stable flat connection over a closed, smooth, Riemannian $n$-manifold $X$, the curvature $F_{A}$ of non-flat connection $A$ has a uniform positive lower $L^{p}$-bound under the condition that all flat connections are all \textit{non-degenerate}, See \cite[Theorem 1.2]{Huang2}. 
\end{remark}
\begin{proof}[\textbf{Proof of Theorem \ref{T1.3}}]
	At first, we give the priori estimate for the curvature of connection. Since $$F_{A}=F_{\Ga}+d_{\Ga}a+a\wedge a,$$
	we have
	\begin{equation}\nonumber
	\begin{split}
	\|F_{A}\|^{2}_{L^{2}(Y)}&\leq\|d_{\Ga}a\|^{2}_{L^{2}(Y)}+\|a\wedge a\|^{4}_{L^{2}(Y)}\\
	&\leq\|d_{\Ga}a\|^{2}_{L^{2}(Y)}+\|a\|^{4}_{L^{4}(Y)}\\
	&\leq\|d_{\Ga}a\|^{2}_{L^{2}(Y)}+C_{S}\|a\|^{4}_{L^{2}_{1}(Y)}.\\
	\end{split}
	\end{equation}
	The last inequality, we use the Sobolev embedding $L^{2}_{1}\hookrightarrow L^{p}$ for $2\leq p\leq6$ with embedding constant $C_{S}$. Here $$|d_{\Ga}a|\leq2|\na_{\Ga}a|$$ due to the fact that  $$2d_{\Ga}a(U,V)=\na_{\Ga}a(U,V)-\na_{\Ga}a(U,V)$$ for all $U,V\in T_{y}(Y)$. Combining the preceding inequalities yields
	$$\|F_{A}\|^{2}_{L^{2}(Y)}\leq c(\|a\|^{2}_{L^{2}_{1}(Y)}+\|a\|^{4}_{L^{2}_{1}(Y)}).$$
	where $c=c(Y,g)$ is a positive constant. 
	
	If $\|a\|_{L^{2}_{1}(Y)}\leq 1$, then
	$$\|F_{A}\|^{2}_{L^{2}(Y)}\leq 2c\|a\|^{2}_{L^{2}_{1}(Y)}.$$ 
	Therefore, we have $$\|a\|^{2}_{L^{2}_{1}(Y)}\geq\frac{C}{2c},$$
	where $C$ is the positive constant in Proposition \ref{P4.3}.  We set $\tilde{C}:=\min\{1,\frac{C}{2c}\}$, thus $$\inf_{u\in\mathcal{G}_{P},\Gamma\in M(P,g)}\|u^{\ast}(A)-\Gamma\|^{2}_{L^{2}_{1}(Y)}\geq\tilde{C}.$$
	We complete this proof.
\end{proof}

\section*{Acknowledgements}
We would like to thank the anonymous referees for careful reading of my manuscript and helpful comments. I would like to thank Y. Tanaka for kind comments regarding this and it companion article \cite{Tanaka2017}. This work was supported in part by NSF of China (11801539) and the Fundamental Research Funds of the Central Universities (WK3470000019), the USTC Research Funds of the Double First-Class Initiative (YD3470002002).

\bigskip
\footnotesize

\end{document}